\NeedsTeXFormat{LaTeX2e} 

\documentclass{amsart}
\usepackage{amsfonts}
\usepackage{amsmath,amsthm}
\usepackage{amsfonts,amssymb}
\usepackage{mathrsfs}
\usepackage{enumerate}

\usepackage{multirow}
\usepackage{booktabs}

\hfuzz1pc 



\newtheorem{thm}{Theorem}[section]

\newtheorem{lem}[thm]{Lemma}

\newtheorem{rem}[thm]{Remark}

\numberwithin{equation}{section}

\newcommand{\al}{\alpha}

\def\vz{\varepsilon}

\def\lz{\lambda}
\def\Lz{\Lambda}

\def\az{\alpha}

\def\bz{\beta}

\def\({\Bigl(}
\def \){ \Bigr)}

 \def\RR{{\mathbb R}}

\def\x{{\mathbf x}}
\def\y{{\mathbf y}}
\def\k{{\mathbf k}}

\def\va{\varepsilon}

\begin{document}
\def\RR{\mathbb{R}}
\def\Exp{\text{Exp}}
\def\FF{\mathcal{F}_\al}

\title[] {Average case tractability of multivariate approximation with Gevrey type kernels}

\author{Wanting Lu} \address{ Department of Mathematics, Tianjin Renai College, Tianjin 301636, China.}
 \email{luwanting1234@163.com}

\author{Heping Wang} \address{ School of Mathematical Sciences, Capital Normal
University,
Beijing 100048,
 China.}
\email{wanghp@cnu.edu.cn}

\keywords{Tractability, Periodic Gevrey  kernels, Average case
setting} \subjclass[2010]{41A63; 65C05; 65D15;  65Y20}

\begin{abstract} We consider  multivariate  approximation
problems in the average case setting with a zero mean Gaussian
measure whose covariance kernel is  a periodic Gevrey kernel.
     We
investigate various  notions of algebraic tractability and
exponential tractability, and obtain necessary and sufficient
conditions in terms of the  parameters of the problem.

\end{abstract}

\maketitle
\input amssym.def

\section{Introduction and main results}

Gevrey spaces consisting of $C^\infty$-functions were first
introduced in 1918 by M. Gevrey \cite{G} and have played an
important role in partial differential equations. A standard
reference on Gevrey spaces is Rodino's book \cite{R}. Recently,
K\"uhn and other authors in \cite{KMU, KP} among others
investigated the optimal linear approximation of the embeddings
from Gevrey spaces on the $d$-dimensional torus $\Bbb T^d$ to
$L_2(\Bbb T^d)$, and obtained preasymptotics, asymptotics, and
strong equivalences of the approximation numbers. They also
obtained necessary and sufficient conditions for different notions
of algebraic tractability or exponential tractability   of the
above Gevrey embeddings in the worst case setting. Chen and Wang
in \cite{CW2} among others obtained the similar results about
Gevrey type embeddings on the sphere and on the ball in the worst
case setting.

This paper is devoted to discussing  multivariate  approximation
problem in the average case setting with a zero mean Gaussian
measure whose covariance kernel is  the periodic Gevrey kernel.
Here, the dimension $d$ may be large or even huge. We consider
algorithms that use finitely many evaluations of arbitrary
continuous linear functionals. For a given error threshold
$\vz\in(0,1)$ the information complexity
 is defined to be the minimal number of information operations
for which the approximation error of some algorithm is at most
$\vz$.

Tractability is the study of how information complexity $n(\vz,
d)$ depends on $\vz$ and $d$. The algebraic (classical)
tractability (ALG-tractability) describes how  $n(\vz, d)$ behaves
as a function of $d$ and $\vz^{-1}$,  while the EXP-tractability
does as one of $d$ and $(1+\ln\vz^{-1})$. Recently the  study of
ALG-tractability and EXP-tractability   has attracted much
interest, and a great number of interesting results
 have been obtained (see
\cite{CW1, DKPW, DLPW, GW, IKPW, NW1, NW2, NW3,  PP, PPW, S, SW15,
X3} and the references therein).

In this paper, we investigate ALG-tractability and
EXP-tractability of multivariate approximation problem in the
average case setting with covariance kernels being the periodic
Gevrey kernels. Such problem was considered in \cite{KMU, KP} in
the worst case setting. We give necessary and sufficient
conditions for various notions of ALG-tractability and
EXP-tractability in terms of the parameters of the problem.

Although our results are proven for continuous linear functional
information classes, it follows from \cite{LW, NW3, X5} that our
results also hold for standard information classes.

This paper is organized as follows. Subsections 2.1 and 2.2  is
devoted to introduce average case and worst case approximation
problems with Gevrey kernels. In subsection 2.3 we introduce
various notions of ALG-tractability and EXP-tractability. In
subsection 2.4 we give our main results Theorems 2.2 and 2.3.
After that, in section 3 we give the proofs of Theorems 2.2 and
2.3.

 \section{Preliminaries and main results}

\subsection{Average case approximation problem with
Gevrey kernels}\

 Let $C(\Bbb T^d)$ be the space of continuous functions on the
 $d$-dimensional torus $\Bbb T^d=[0,2\pi]^d$. The  space $C(\Bbb T^d)$  is equipped with a zero-mean Gaussian measure
 $\mu_d$
 whose  covariance kernel is the Gevrey kernel given by
\begin{align}K_{d,\alpha, \beta, p}(\mathbf{x},\mathbf{y})&=\int_{C(\Bbb T^d)}f(\x)f(\y)\mu_d(df)\notag\\ &=\sum_{\mathbf{k}\in\mathbb{Z}^d}
    \exp(-2\beta |\mathbf{k}|_{p}^{\alpha}
    )\exp(i\mathbf{k}(\mathbf{x}-\mathbf{y})),\label{2.1.1}\end{align}where
    $0<\az,\bz,p<\infty$,
    $\x,\y\in \Bbb T^d,\ \x\y=\sum\limits_{i=1}^dx_iy_i,\ i^2=-1,\ |\mathbf{k}|_p=\big(\sum\limits_{j=1}^{d}|k_j|^p\big)^{\frac1
    p}$.

We consider the multivariate problem ${\rm APP}=\{{\rm
APP}_d\}_{d\in\Bbb N}$ which is defined via the
embedding operator
\begin{equation}\label{2.1} {{\rm APP}}_d: C(\Bbb T^d)\to L_{2}(\Bbb T^d),\ \ {\rm with}\ \  {\rm APP}_d\,
f=f.\end{equation}

It is well known that, in the average case setting with the
average being with respect to a zero-mean Gaussian measure,
adaptive choice of the above information evaluations do not
essentially help, see \cite{TWW}. Hence, we can restrict
 to nonadaptive algorithms. We approximate ${\rm APP}_d$ by algorithms
$A_{n,d}f$ of the form,
\begin{equation}\label{2.2} A_{n,d}f=\phi
    _{n,d}(L_1(f),L_2(f),\dots,L_n(f)),\end{equation} where $L_i, \ i=1,\dots,n$ are continuous linear functionals on $C(\Bbb T^d)$,
and $\phi _{n,d}:\;\Bbb R^n\to L_{2}(\Bbb T^d)$ are arbitrary
measurable mappings from $\Bbb R^n$ to $L_2(\Bbb T^d)$. The
average case error of an algorithm $A_{n,d}$  is defined as
$$e^{\rm avg}(A_{n,d}):=\Big(\int_{C(\Bbb T^d)}\|{\rm
    APP}_{d} f-A_{n,d}f\|^2_{2}\ \mu_d(df)\Big)^{1/2}.$$
The $n$th  minimal average case error  is defined by
$$e^{\rm avg}(n,d):=\inf_{A_{n,d}}e^{\rm avg}(A_{n,d}),$$
where the infimum is taken over all algorithms of the form
\eqref{2.2}.

For $n=0$, we use $A_{0,d}=0$. We obtain  the so-called initial
error $e^{\rm avg}(0,d)$ defined by
$$e^{\rm avg}(0,d):=\Big(\int_{C(\Bbb T^d)}\| f\|^2_{2}\ \mu_d(df)\Big)^{1/2}.$$

Let $C_{\mu_d}:(C(\Bbb T^d))^*\rightarrow C(\Bbb T^d)$ denote the
covariance operator of $\mu_d$, as defined in \cite[Appendix
B]{NW1}. Then the induced measure $\nu_d=\mu_{d}({\rm
    APP}_{d})^{-1}$ of
$\mu_d$ is a zero-mean
 Gaussian measure  on $L_2(\Bbb T^d)$ with covariance operator
 $$C_{\nu_d}:L_2(\Bbb T^d)\rightarrow L_2(\Bbb T^d),\ \ \ C_{\nu_d}={\rm
    APP}_{d}C_{\mu_d}({\rm
    APP}_{d})^*,$$where $A^*$ is the adjoint operator of a linear operator $A$.
 Then for $f\in L_2(\Bbb T^d)$ we have
$$C_{\nu_d}(f)(\x)=(2\pi)^{-d}\int_{\Bbb T^d}K_{d,\alpha, \beta, p}(\mathbf{x},\mathbf{y})
f(\mathbf{y})d\mathbf{y},\ \mathbf{x}\in \Bbb T^d.$$ It follows
from \cite[Chapter 6]{TWW} and \cite{NW1} that $e^{\rm avg}(n,d)$
are described through the eigenvalues  of the covariance operator
$C_{\nu_d}$. Let $\{(\lambda_{d,k},\eta_{d,k})\}_{k=1}^{\infty}$
be the eigenpairs of $C_{\nu_d}$ satisfying
\begin{equation*}C_{\nu_d}\,\eta_{d,k} =\lz_{d,k}\,\eta_{d,k},\  \ \text{for all }\  k\in\mathbb{N}, \end{equation*}
and $$\lz_{d,1}\ge \lz_{d,2}\ge \cdots\ge \lz_{d,k}\ge
\cdots\ge0.$$We remark that for $\k\in \Bbb Z^d$, $$
C_{\nu_d}(\exp(i\k\x))=\exp(-2\beta|\mathbf
k|_{p}^{\alpha})\exp(i\k\x),
$$and $\{\exp(i\k\x)\}_{\k\in \Bbb Z^d}$ is an eigenfunction system of $C_{\nu_d}$.  Hence, $\{\lambda_{d,k}\}_{k=1}^{\infty}$ is
just the  nonincreasing rearrangement of $\{\exp(-2\beta|\mathbf
k|_{p}^{\alpha})\}_{\k\in\mathbb{Z}^{d}}$, and hence
$\lz_{d,1}=1$.

 From \cite{TWW, NW1}
 we get   that the $n$-th minimal average case
error is
\begin{equation}\label{2.2-0}e^{\rm avg}(n,d)=\big(\sum_{k=n+1}^{\infty}\lz_{d,k}\big)^{1/2},\end{equation}
and  it is achieved by the optimal algorithm
$$A_{n,d}^*f=\sum_{k=1}^n \langle f, \eta_{d,k} \rangle_{2}\,
\eta_{d,k}.$$That is, $$e^{\rm avg}(n,d)=\big(\int_{C(\Bbb
T^d)}\|f-A_{n,d}^*f\|^2_{2}\mu_d(df)\big)^{1/2}=\big(\sum_{k=n+1}^{\infty}\lz_{d,k}\big)^{1/2}.$$
The trace of $C_{\nu_d}$ is given by
$${\rm trace}(C_{\nu_d})=(e^{\rm avg}(0,d))^2=\int_{C(\Bbb T^d)}\|f\|^2_{2}\,\mu_d(df)=
\sum_{k=1}^{\infty}\lz_{d,k}<\infty.$$

\subsection{Worst case approximation problem with
Gevrey kernels}\

In order to prove main results in the average case setting, we
need some results about the approximation problem with Gevrey
kernels in the worst case setting.

For $0<\az,\bz,p<\infty, \ d\in\Bbb N$, let $H(K_{d,\alpha, \beta,
p})$ be the reproducing kernel Hilbert space with  reproducing
kernel being the Gevery kernel $K_{d,\alpha, \beta, p}$ given by
\eqref{2.1.1}. The space $H(K_{d,\alpha, \beta, p})$ is just the
Gevrey  spaces $G^{\alpha, \beta, p}(\Bbb T^d)$ on $\Bbb T^d$
consisting of all
 $f\in C^{\infty}(\Bbb T^d)$ such that $$\|f\|_{G^{\alpha, \beta, p}}:=\Big(
 \sum_{\mathbf{k}\in\mathbb{Z}^d}\exp(2\beta |\mathbf{k}|_{p}^{\alpha})|\hat{f}
 (\mathbf{k})|^2\Big)^{\frac1 2}<+\infty,$$where
 $$\hat{f}(\mathbf{k})=(2\pi)^{-d}\int_{[0,2\pi]^{d}}f(\mathbf{x})\exp(-i\mathbf{k}\mathbf{x})d\mathbf{x}$$are
 the Fourier coefficients of $f$.
 That is,
 $$H(K_{d,\alpha, \beta,
p})=G^{\alpha, \beta, p}(\Bbb T^d)=\Big\{f\ \big|\
\|f\|_{G^{\alpha, \beta, p}}<+\infty\Big\}.$$The  Gevrey space has
been widely applied to differential equations and approximation
theory (see \cite{CW2, G,  KMU, KP, R}).

 We consider the multivariate approximation problem
  $S=\{S_d\}_{d\in\Bbb N}$  which is defined via the
embedding operator
$$S_{d}:G^{\alpha,\beta,p}(\Bbb T^{d})\rightarrow L_{2}(\Bbb T^{d})\
 \text{with}\  S_d f = f.$$
The worst case error of an algorithm $A_{n,d}$ of the form
\eqref{2.2} is defined as
$$e^{\text{wor}}(A_{n,d})=\sup_{\|f\|_{G^{\alpha,\beta,p}}\le1}
\|S_d f-A_{n,d} f\|_2.$$ The $n$-th minimal worst
case error is defined by
$$e^{\text{wor}}(n,d)=\inf\limits_{A_{n,d}}e^{\text{wor}}(A_{n,d}).$$
For $n=0$ we set $A_{0,d}=0$. The worst case error of $A_{0,d}$ is
called the initial error and is given by
$$e^{\text{wor}}(0,d)=\sup_{\|f\|_{G^{\alpha,\beta,p}}\le1}
\|S_d f\|_{2}=\|S_d\|.$$ Let
$\lambda_{d,k}, \ k\in \mathbb{N}$ be the eigenvalues of the
operator $(S_d )^*S_d $
satisfying
$$\lz_{d,1} \ge \lz_{d,2} \ge \ldots \ge \lz_{d,k} \ge \ldots >
0.$$We remark that $\{\lambda_{d,k}\}_{k=1}^{\infty}$ is just the
nonincreasing rearrangement of $\{\exp(-2\beta|\mathbf
k|_{p}^{\alpha})\}_{\k\in\mathbb{Z}^{d}}$. The $n$-th minimal worst
case error $e^{\text{wor}}(n, d)$ is
$$e^{\text{wor}}(n,d)=\sqrt{\lz_{d,n+1}}.$$
Specifically, the initial error is given by
$$e^{\text{wor}}(0,d)=\sqrt{\lz_{d,1}}=1.$$

 The  information complexity  can be studied using either
the absolute error criterion (ABS) or the normalized error
criterion (NOR). In the average and worst case settings for
$\star\in\{{\rm
    ABS,\,NOR}\}$ and  $X\in\{\rm {wor},\,\rm{avg}\}$, we
define the information complexity $n^{X,\,\star}(\va ,d)$  as
\begin{equation} n^{ X,\, \star}(\va
    ,d):=\inf\{n \ \big|\  e^{\rm
        X}(n,d)\le \vz\, {\rm CRI}_d^X\}, \label{2.1.10}\end{equation} where
\begin{equation*}
    {\rm CRI}_d^X:=\left\{\begin{split}
        &\ \ 1, \qquad\, \quad\quad\text{for $\star$=ABS,} \\
        &e^{X}(0,d), \quad\ \, \text{ for $\star$=NOR}.
    \end{split}\right.
\end{equation*}
Note that $e^{\rm wor}(0,d)=1$. This means that the normalized
error criterion and the absolute error criterion in the worst case
setting coincide. We write $n^{\star}(\va
    ,d)$, and $CRI_d$ instead of $n^{{\rm avg},\,\star}(\va
    ,d)$, and  $CRI_d^{\rm avg}$ respectively,  for brevity. We have
    \begin{equation}n^{\rm NOR}(\va
    ,d)\le n^{\rm ABS}(\va
    ,d).\label{2.33}\end{equation}

\subsection{Notions of ALG-tractability and EXP-tractability}\

 Various notions of ALG-tractability and EXP-tractability have been discussed for
multivariate problems. In this subsection we recall the following
basic tractability notions.

Let ${\rm APP}= \{{\rm
    APP}_d\}_{d\in\Bbb N}$, $S= \{S_d\}_{d\in\Bbb N}$, $X\in\{\rm {wor},\,\rm{avg}\}$, and
$\star\in\{{\rm ABS,\,NOR}\}$.
In the average or worst  case setting for error criterion $\star$,
we say that ${\rm APP}$ or $S$ is

$\bullet$ Algebraic strongly polynomially tractable (ALG-SPT) if
there exist $ C>0$ and non-negative number $p$ such that
\begin{equation*}n^{X,\,\star}(\va,d)\leq C\varepsilon^{-p},\
    \text{for all}\ \varepsilon\in(0,1);\end{equation*}

$\bullet$ Algebraic polynomially tractable (ALG-PT)  if there
exist $ C>0$ and non-negative numbers $p,q$ such that
$$n^{X,\,\star}(\va,d)\leq Cd^{q}\varepsilon^{-p},\ \text{for all}\
d\in\mathbb{N},\ \varepsilon\in(0,1);$$

$\bullet$ Algebraic quasi-polynomially tractable (ALG-QPT) if
there exist $ C>0$ and non-negative number $t$ such that
\begin{equation*}n^{X,\,\star}(\va,d)\leq C \exp(t(1+\ln{d})(1+\ln{\varepsilon^{-1}})),\
    \text{for all}\ d\in\mathbb{N},\
    \varepsilon\in(0,1);\end{equation*}

$\bullet$ Algebraic uniformly weakly tractable (ALG-UWT)  if
$$\lim_{\varepsilon^{-1}+d\rightarrow\infty}\frac{\ln n^{X,\,\star}(\va,d)}{\varepsilon^{-\sigma}+d^{\tau}}=0,\ \text{for all}\
\sigma, \tau>0;$$

$\bullet$ Algebraic weakly tractable (ALG-WT) if
$$\lim_{\varepsilon^{-1}+d\rightarrow\infty}\frac{\ln n^{X,\,\star}(\va,d)}{\varepsilon^{-1}+d}=0;$$

$\bullet$ Algebraic $(s,t)$-weakly tractable (ALG-$(s,t)$-WT) for
fixed $s, t>0$ if
$$\lim_{\varepsilon^{-1}+d\rightarrow\infty}\frac{\ln n^{X,\,\star}(\va,d)}{\varepsilon^{-s}+d^{t}}=0.$$

Clearly, ALG-$(1,1)$-WT is the same as ALG-WT. If the multivariate approximation problem is
not ALG-WT, then this problem  is called  intractable. For
$0<s_1<s, \ 0<t_1<t$, ALG-$(s_1,t_1)$-WT $\Longrightarrow$
ALG-$(s,t)$-WT. We also have for $s,t>0$,

\vskip 2mm
\begin{center}ALG-SPT $\Longrightarrow$ ALG-PT  $\Longrightarrow$  ALG-QPT
$\Longrightarrow$ ALG-UWT  $\Longrightarrow$
ALG-$(s,t)$-WT.\end{center}

\vskip 2mm

We say that  multivariate approximation problem suffers from the curse of dimensionality if
there exist positive numbers $C, \vz_0, \alpha$ such that for all
$0 < \vz \le \vz_0$ and infinitely many $d \in \mathbb N$,
$$n^{X,\star}(\vz,d)\ge C(1+\alpha)^d.$$

In the average or worst case setting, we say that the
approximation problem  is exponentially convergent
(EXP) if there exist a number $q \in (0, 1)$ and functions $p, C,
C_1 :\mathbb N \rightarrow (0, \infty)$ such that
\begin{equation}
    e^X(n,d)\le C(d)q^{(n/C_1(d))^{p(d)}}, \ \ \ \ \forall n\in\mathbb{N}_0,\ d\in\mathbb{N}.\label{1.6}
\end{equation}
If \eqref{1.6} holds then
$$p^{*,X}(d) = \sup\{ p(d) : p(d) \ {\rm satisfies}\ \eqref{1.6} \}$$
is called the  exponent of EXP. Moreover, we say that the
approximation problem ${\rm APP}$ or $S$ is uniformly
exponentially convergent (UEXP) if the function $p$ in \eqref{1.6}
can be taken to be a constant, i.e., if $p(d) \equiv p
> 0$ for all $d \in \mathbb N$. If the approximation problem
is UEXP, we let
$$p^{*,X} = \sup\{ p > 0 : p(d) \equiv p\ {\rm satisfies}\ \eqref{1.6}\}$$
denote the exponent of UEXP. For problems in EXP, we can consider
EXP-tractability.

Similar to ALG-tractability we now give the basic notions of
EXP-tractability, mirroring the notions  of ALG-tractability
mentioned above.

Let ${\rm APP}= \{{\rm
    APP}_d\}_{d\in\Bbb N}$, $S= \{S_d\}_{d\in\Bbb N}$, $X\in\{\rm {wor},\,\rm{avg}\}$, and
$\star\in\{{\rm ABS,\,NOR}\}$. In the average or worst  case
setting for error criterion $\star$, we say that ${\rm APP}$ or
$S$ is

$\bullet$ Exponential strongly polynomially tractable (EXP-SPT) if
there exist $ C>0$ and non-negative number $p$ such that
\begin{equation*}n^{X,\,\star}(\va,d)\leq C(\ln\varepsilon^{-1}+1)^{p},\
    \text{for all}\ \varepsilon\in(0,1);\end{equation*}

$\bullet$ Exponential  polynomially tractable (EXP-PT)  if there
exist $C>0$ and non-negative numbers $p,q$ such that
$$n^{X,\,\star}(\va
,d)\leq Cd^{q}(\ln\varepsilon^{-1}+1)^{p},\ \text{for all}\
d\in\mathbb{N},\ \varepsilon\in(0,1);$$

$\bullet$ Exponential  quasi-polynomially tractable (EXP-QPT) if
there exist $C>0$ and non-negative number $t$ such that
\begin{equation*}n^{X,\,\star}(\va,d)\leq C \exp(t(1+\ln{d})(1+\ln(\ln\varepsilon^{-1}+1))),\
    \text{for all}\ d\in\mathbb{N},\
    \varepsilon\in(0,1);\end{equation*}

$\bullet$ Exponential  uniformly weakly tractable (EXP-UWT)  if
$$\lim_{\varepsilon^{-1}+d\rightarrow\infty}\frac{\ln n^{X,\,\star}(\va,d)}{(1+\ln\varepsilon^{-1})^{\sigma}+d^{\tau}}=0,\ \text{for
    all}\ \sigma, \tau>0;$$

$\bullet$ Exponential  weakly tractable (EXP-WT) if
$$\lim_{\varepsilon^{-1}+d\rightarrow\infty}\frac{\ln n^{X,\,\star}(\va,d)}{1+\ln\varepsilon^{-1}+d}=0;$$

$\bullet$ Exponential  $(s,t)$-weakly tractable (EXP-$(s,t)$-WT)
for fixed $s,t>0$ if
$$\lim_{\varepsilon^{-1}+d\rightarrow\infty}\frac{\ln n^{X,\,\star}(\va,d)}{(1+\ln\varepsilon^{-1})^{s}+d^{t}}=0.$$

Clearly,  EXP-$(1,1)$-WT is the same as  EXP-WT. If we have some
notions of EXP-tractability, then we  have the same notion of
ALG-tractability. We  also have for $s,t>0$,

\vskip 2mm
\begin{center} EXP-SPT $\Longrightarrow$  EXP-PT  $\Longrightarrow$   EXP-QPT
$\Longrightarrow$  EXP-UWT  $\Longrightarrow$
 EXP-$(s,t)$-WT.\end{center} \vskip 2mm

    \subsection{Main results}\

In the worst case setting, the authors in \cite{KMU, KP}
considered the approximation problem $S=\{S_d\}_{d\in\Bbb N}$ defined by
    \begin{equation}S_{d}:G^{\alpha,\beta,p}(\Bbb T^{d})\rightarrow L_{2}(\Bbb T^{d})\ \ {\rm with}\ S_df=f.\label{2.1.00}\end{equation}
Among others they obtained sufficient and necessary conditions for
ALG-tractability for the above approximation problem. It follows
from \cite[Theorem 16 (iii)]{KMU} that for $n\ge 2^d$,
 $$e^{\rm wor}(n,d)\asymp
 \exp(-C_{\alpha,p}{\beta}d^{\alpha/p}n^{\alpha/d}),$$where
 $C_{\az,p}$ is a positive constant depending only on $\az,p$.
 This means that $S$ is EXP with the exponent
$$p^{*, \rm wor}(d)=\alpha/d,$$ and is not UEXP.
 Also, the authors gave
results about EXP-tractability in the worst case setting in
\cite[Remark 7.7]{KMU}. We summarize these tractability results as
follows.

\begin{lem} For $\az,\beta,p>0$, consider the approximation problem \eqref{2.1.00} in the worst case
setting.
    For the normalized error criterion or the absolute error criterion in the worst case setting, we have \vskip 2mm

    (i) $S$
    is  not ALG-PT or  ALG-SPT; \vskip 2mm

    (ii) $S$
    is ALG-QPT iff $\alpha\geq p$;\vskip 2mm

    (iii) $S$
    is ALG-UWT, ALG-$(s,t)$-WT for all $s,t>0$, and ALG-WT;\vskip 2mm

(iv) $S$
    is   EXP with the exponent $p^{*, \rm wor}(d)=\alpha/d,$ and is not UEXP; \vskip 2mm

    (v) $S$
    is  not EXP-UWT; \vskip 2mm

(vi) $S$ is EXP-WT if and only if
${\alpha>p}$;\vskip 2mm

    (vii) $S$
    is  EXP-$(s,t)$-WT if and only if $t>1$ or $s>\frac{p}{\az}$.\vskip 2mm

\end{lem}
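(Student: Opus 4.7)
The plan is to reduce all seven claims to statements about the explicit count $n^{\rm wor}(\varepsilon,d):=\#\{\mathbf{k}\in\mathbb{Z}^d:|\mathbf{k}|_p<\tau\}$, where $\tau=\tau(\varepsilon):=(\beta^{-1}\ln\varepsilon^{-1})^{1/\alpha}$; this identity follows from $e^{\rm wor}(n,d)=\sqrt{\lambda_{d,n+1}}$ together with the fact that $\lambda_{d,k}$ is the nonincreasing rearrangement of $\{\exp(-2\beta|\mathbf{k}|_p^\alpha)\}_{\mathbf{k}\in\mathbb{Z}^d}$. The key tool will be matching two-sided bounds on this count. Writing $m=\lfloor\tau^p\rfloor$, the lower bound $n^{\rm wor}(\varepsilon,d)\ge\binom{d}{m}2^m$, valid when $m\le d$, comes from vectors having exactly $m$ coordinates equal to $\pm 1$; conversely any $\mathbf{k}$ with $|\mathbf{k}|_p\le\tau$ has at most $\tau^p$ nonzero coordinates each bounded in absolute value by $\tau$, which yields the upper bound $\ln n^{\rm wor}(\varepsilon,d)\le C\min(d,\tau^p+1)(1+\ln d+\ln\tau)$.

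Part~(iv) follows immediately from Theorem~16(iii) of \cite{KMU}, which asserts $e^{\rm wor}(n,d)\asymp\exp(-c_{\alpha,p}\beta d^{\alpha/p}n^{\alpha/d})$ for $n\ge 2^d$: the upper direction rewrites as $e^{\rm wor}(n,d)\le C(d)\,q^{(n/C_1(d))^{\alpha/d}}$ proving EXP with exponent $p(d)=\alpha/d$, the lower direction shows this exponent is sharp, and $\alpha/d\to 0$ precludes UEXP. All the negative tractability statements will be deduced from a single bad point. Setting $\varepsilon_d:=\exp(-2\beta d^{\alpha/p})$ makes $\tau(\varepsilon_d)>d^{1/p}$, so that $\{-1,0,1\}^d\subset\{\mathbf{k}:|\mathbf{k}|_p<\tau(\varepsilon_d)\}$ and hence $n^{\rm wor}(\varepsilon_d,d)\ge 3^d$. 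Comparing $\ln n^{\rm wor}(\varepsilon_d,d)\ge d\ln 3$ against each proposed tractability bound then yields: (ii) fails for $\alpha<p$ because $(1+\ln d)(1+2\beta d^{\alpha/p})=o(d\ln d)$; (v) fails by choosing $\sigma<p/\alpha$ and a WT-exponent $t<1$ so that $(1+\ln\varepsilon_d^{-1})^\sigma+d^t=o(d)$; (vi) fails for $\alpha\le p$ because $1+\ln\varepsilon_d^{-1}+d\asymp d$; and (vii) fails for $t\le 1$ with $s\le p/\alpha$ by the same computation. For~(i) I would use instead a $d$-independent $\varepsilon_0$ with $\tau(\varepsilon_0)^p\ge q+1$, which forces $n^{\rm wor}(\varepsilon_0,d)\ge\binom{d}{q+1}\gtrsim d^{q+1}$, incompatible with any bound of the form $Cd^q\varepsilon_0^{-p}$.

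The positive claims are verified by inserting the upper bound on $\ln n^{\rm wor}$ into the defining inequality or limit of each tractability notion. I would split into two regimes: regime~(A) $\tau^p\le d$, where the constraint $\ln\varepsilon^{-1}\le\beta d^{\alpha/p}$ forces $\ln\ln\varepsilon^{-1}\lesssim\ln d$; and regime~(B) $\tau^p>d$, where the cruder bound $\ln n^{\rm wor}\le Cd(1+\ln\tau)$ applies together with $d\le(\beta^{-1}\ln\varepsilon^{-1})^{p/\alpha}$. For the weak-tractability limits in~(iii), (vi), (vii), I would exploit $(a+b)^{-1}\le\min(a^{-1},b^{-1})$ to decide in each subsequence whether the $\varepsilon$- or $d$-side of the denominator dominates, and then use that any polynomial in $\ln d$ is negligible against any positive power of $d$, and similarly for $\varepsilon^{-1}$. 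The main obstacle will be the sufficiency half of~(ii), the ALG-QPT claim for $\alpha\ge p$: one must show $\ln n^{\rm wor}(\varepsilon,d)\lesssim(1+\ln d)(1+\ln\varepsilon^{-1})$ uniformly, which requires using the sharp combinatorial bound above rather than the crude $d\ln(3\tau)$, and a careful comparison of $\ln\tau$ against $\ln d$ separately in regimes~(A) and~(B); the other positive claims follow from coarser versions of the same analysis.
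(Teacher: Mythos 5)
Your proposal is correct in substance, but it follows a genuinely different route from the paper for a simple reason: the paper does not prove Lemma 2.1 at all -- it is a summary of known worst-case results imported from the literature (\cite[Theorem 16 (iii) and Remark 7.7]{KMU} and \cite{KP}). You instead rebuild all seven items from the identity $n^{\rm wor}(\vz,d)=\#\{\k\in\ZZ^d:\ |\k|_p<(\beta^{-1}\ln\vz^{-1})^{1/\alpha}\}$ (valid because ${\rm ABS}={\rm NOR}$ in the worst case and $e^{\rm wor}(n,d)=\sqrt{\lz_{d,n+1}}$) combined with two-sided lattice-point counts in $\ell_p$-balls; this is essentially the machinery behind the cited results, and it is also the machinery this paper itself uses later for the average case (its Lemma 3.1, derived from covering numbers of $B\ell_p^d$). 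Your single bad point $\vz_d=\exp(-2\beta d^{\alpha/p})$, which gives $n^{\rm wor}(\vz_d,d)\ge 3^d$, cleanly settles the necessity halves of (ii), (v), (vi), (vii), and the fixed-$\vz_0$, $\binom{d}{q+1}2^{q+1}$ count settles (i); these computations check out (one small slip: for the QPT refutation when $\alpha<p$ you need $(1+\ln d)(1+2\beta d^{\alpha/p})=o(d)$, not merely $o(d\ln d)$, to beat $d\ln 3$ -- fortunately $d^{\alpha/p}\ln d=o(d)$ does hold). Two caveats. First, your upper bound $\ln n^{\rm wor}\lesssim\min(d,\tau^p+1)(1+\ln d+\ln\tau)$ is cruder than the sharp two-sided estimate $\min\{d\ln(2m/d),\,m\ln(2d/m)\}$ of \cite[Lemma 5]{KMU} (the paper's Lemma 3.1); it still suffices for the sufficiency halves of (ii), (iii), (vi), (vii), but in the ALG-QPT step for $\alpha\ge p$ -- the step you yourself flag as the main obstacle -- the term $d\ln\tau$ in the regime $\tau^p>d$ requires an extra absorption argument (e.g.\ monotonicity of $x/(1+\ln x)$, or passing to the sharp bound) to dominate the factor $\ln\ln\vz^{-1}$, so as written that part is a feasible plan rather than a finished proof. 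Second, item (iv) is not self-contained in your write-up: like the paper, you invoke \cite[Theorem 16 (iii)]{KMU} for the decay $e^{\rm wor}(n,d)\asymp\exp(-C_{\alpha,p}\beta d^{\alpha/p}n^{\alpha/d})$, so on that point the two routes coincide. What your approach buys is a uniform, nearly self-contained derivation of all seven statements from one counting estimate; what the paper's citation buys is brevity and the sharper constants already available in \cite{KMU,KP}.
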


We now  give main results of this paper. For $\az,\beta,p>0$,
consider the
 approximation problem   ${\rm APP}=\{{\rm
APP}_d\}_{d\in\Bbb N}$ defined over the space $C(\Bbb T^d)$
equipped with a zero-mean Gaussian measure whose covariance kernel
is given as the Gevrey kernel defined by \eqref{2.1.1}. We give
 necessary and sufficient conditions for various notions of ALG-tractability
 in the average case setting. We remark that EXP-tractability in the worst and average case settings have an
intimate connection. Specifically, according to \cite[Theorems 3.2
and 4.2]{X3} and \cite[Theorem 3.2]{PPXD}, we have the same
results in the worst and average case settings using ABS
concerning EXP-WT, EXP-UWT, and EXP-$(s, t)$-WT for $0 < s \le 1$
and $t > 0$. Using the above properties we obtain
 necessary and sufficient conditions for various notions of EXP-tractability
 in the average case setting
   and obtain  complete results about the
tractability. Our main results can be formulated as follows.

\begin{thm} For $\az, \beta,p>0$, consider   the approximation problem \eqref{2.1} in the average case setting with a zero mean Gaussian
measure whose covariance kernel is  the Gevrey kernel given as in
\eqref{2.1.1}.
     For the absolute or normalized error criterion in the average case setting, we
     have \vskip 2mm

  (i) $\rm APP$ is ALG-$(s,t)$-WT for $s>0$ and $t>1$; \vskip 2mm

(ii) $\rm APP$ is ALG-$(s,t)$-WT with $s>0$ and $0<t\leq 1$  if
and only if $\rm APP$  is  ALG-WT if and only if  $\rm APP$ is
ALG-UWT if and only if $\alpha>p$;\vskip 2mm

    (iii) $\rm  APP$ is not ALG-QPT, not ALG-PT or  ALG-SPT;\vskip 2mm

    (iv)  $\rm APP$ suffices from the curse of dimensionality if and only if  $\alpha\leq p$.

\end{thm}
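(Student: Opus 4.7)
My plan is to reduce every tractability claim about the average case problem $\rm APP$ to the worst case information in Lemma 2.1. Two basic comparisons between the two settings are
$$(e^{\rm wor}(n,d))^2=\lambda_{d,n+1}\le(e^{\rm avg}(n,d))^2\le\lambda_{d,n+1}^{1-\tau}M_d(\tau),\qquad\tau\in(0,1),$$
where $M_d(\tau):=\sum_{\mathbf{k}\in\mathbb Z^d}\exp(-2\beta\tau|\mathbf{k}|_p^\alpha)$, the right inequality being H\"older's inequality applied to $\lambda_{d,k}=\lambda_{d,k}^\tau\lambda_{d,k}^{1-\tau}$. Since $CRI_d^2=M_d(1)$, every tractability question reduces to estimating $M_d(\tau)$ and counting the number $N_d(T):=\#\{\mathbf{k}\in\mathbb Z^d:|\mathbf{k}|_p^\alpha\le T\}$ of large eigenvalues, matched against the worst case rate $\lambda_{d,n+1}\asymp\exp(-C_{\alpha,p}\beta d^{\alpha/p}n^{\alpha/d})$ for $n\ge 2^d$ recorded in Lemma 2.1 (from \cite[Theorem 16]{KMU}).

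The key analytic step is to estimate $\ln M_d(\tau)$ by stratifying $\mathbf{k}$ by its support size $j=\#\{i:k_i\ne 0\}$ and using the elementary lower bound $|\mathbf{k}|_p^\alpha\ge j^{\alpha/p}$ when $|k_i|\ge 1$ on the support:
$$M_d(\tau)\le\sum_{j=0}^d\binom{d}{j}\exp(-\beta\tau j^{\alpha/p})\Bigl(2\sum_{k=1}^\infty e^{-\beta\tau k^\alpha}\Bigr)^j.$$
A saddle-point optimisation in $j$ yields $\ln M_d(\tau)\ll(\ln d)^{\alpha/(\alpha-p)}$ when $\alpha>p$ and, at worst, $\ln M_d(\tau)\ll d\ln d$ when $\alpha\le p$. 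Combined with direct counting of integer points inside the $\ell_p$ ball (volume regime) or inside the support-stratified enumeration (moderate-$T$ regime), this feeds into the H\"older bound to give
$$\ln n^{\rm avg,ABS}(\varepsilon,d)\le\begin{cases}C(\ln d)^{\alpha/(\alpha-p)}+C(\ln\varepsilon^{-1})^{p/\alpha}\ln d, & \alpha>p,\\ Cd\ln d+(d/\alpha)\ln\ln(\varepsilon^{-1})+Cd, & \alpha\le p,\end{cases}$$
which, via the inequality $n^{\rm NOR}\le n^{\rm ABS}$ in (2.33), proves (i) for all parameters (since $d\ln d/d^t\to 0$ for $t>1$), proves ALG-WT and ALG-UWT in (ii) when $\alpha>p$, and rules out the curse of dimensionality in (iv) when $\alpha>p$.

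For the converse directions in (ii) and (iv), the subset $\{-1,0,1\}^d$ gives, when $\alpha\le p$ and hence $j^{\alpha/p}\le j$,
$$CRI_d^2\ge\sum_{j=0}^d\binom{d}{j}2^j e^{-2\beta j^{\alpha/p}}\ge\sum_{j=0}^d\binom{d}{j}2^j e^{-2\beta j}=(1+2e^{-2\beta})^d,$$
so $n^{\rm avg,ABS}(\varepsilon,d)\ge CRI_d^2-\varepsilon^2\ge c(1+2e^{-2\beta})^d$, the curse of dimensionality in ABS. For the NOR version I would exploit that the restricted probability distribution $\lambda_{\mathbf{k}}/CRI_d^2$ on $\{-1,0,1\}^d$ stochastically dominates a $d$-fold product distribution of positive Shannon entropy $H>0$, so a classical AEP/Chernoff argument forces any set of mass $\ge 1-\varepsilon^2$ to have cardinality $\ge\exp(dH-o(d))$, again the curse. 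When $\alpha<p$ one may alternatively use the pointwise domination $\lambda_{\mathbf{k}}\ge\prod_j e^{-2\beta|k_j|^\alpha}$ (since $|\mathbf{k}|_p^\alpha\le\sum_j|k_j|^\alpha$) to reduce to the genuine tensor product kernel $(\alpha,\beta,\alpha)$ already treated. Finally, part (iii) follows from $n^{\rm avg}\ge n^{\rm wor}$ together with the worst case count, which grows in $d$ faster than every quasi-polynomial bound $\exp(t(1+\ln d)(1+\ln\varepsilon_0^{-1}))$ at any fixed $\varepsilon_0\in(0,1)$.

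The main obstacle I anticipate is the NOR direction of the curse of dimensionality in (iv): dividing by $CRI_d$ exactly cancels the exponentially large trace that drives the ABS argument, so one must establish a concentration-of-measure bound on the product distribution restricted to $\{-1,0,1\}^d$ showing that every top-mass set of sub-exponential cardinality leaves mass $\ge\varepsilon^2$ uncaptured. A secondary technical nuisance is that the sharp worst case asymptotic from \cite{KMU} holds only for $n\ge 2^d$, so in the $\alpha>p$ case where the relevant $n$ is much smaller, the ALG-UWT bound must be closed by direct lattice counting inside $\ell_p^d$ balls rather than by plugging into the asymptotic.
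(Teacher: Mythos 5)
Your overall architecture (bound $\sum_k\lambda_{d,k}^{\tau}$ from above for the positive results, bound the trace $\Lambda_d=\sum_k\lambda_{d,k}$ from below for the negative ones) matches the paper, but three steps do not hold as written. The central analytic estimate is flawed in the regime $\alpha<p$: your stratified bound $M_d(\tau)\le\sum_{j=0}^d\binom{d}{j}e^{-\beta\tau j^{\alpha/p}}\bigl(2\sum_{k\ge1}e^{-\beta\tau k^{\alpha}}\bigr)^j$ needs the pointwise inequality $|\mathbf k|_p^{\alpha}\ge\sum_{i\in\mathrm{supp}\,\mathbf k}|k_i|^{\alpha}$, which is valid only for $\alpha\ge p$; for $\alpha<p$ the opposite inequality $|\mathbf k|_p^{\alpha}\le\sum_i|k_i|^{\alpha}$ holds, and the displayed bound is genuinely false: if $c_0:=\sum_{k\ge1}e^{-\beta\tau k^{\alpha}}<\tfrac12$ (e.g.\ $\beta$ large), the left side is at least $2^d e^{-2\beta\tau d^{\alpha/p}}$ (the vectors in $\{-1,1\}^d$), while the right side is at most $(1+2c_0)^d<2^d$, so the ratio diverges because $d^{\alpha/p}=o(d)$. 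The conclusion $\ln M_d(\tau)\lesssim d\ln d$ is correct, but it must come from the lattice-point count $\ln\#\{\mathbf k:|\mathbf k|_p^p\le m\}\asymp\min\{m\ln(2d/m),\,d\ln(2m/d)\}$ (the paper's Lemma 3.1, via entropy numbers of $B\ell_p^d$) together with a careful split of the shell sums; the "direct counting" you mention only in passing is where the real work is, or else your stratification must be repaired with the power-mean correction $|\mathbf k|_p^{\alpha}\ge j^{\alpha/p-1}\sum_i|k_i|^{\alpha}$, whose $j$-dependent inner factor forces a new optimization.

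Two further gaps concern the lower bounds. For (iii) with $\alpha>p$, the route "$n^{\rm avg}\ge n^{\rm wor}$ plus super-quasi-polynomial growth of the worst-case count" cannot work: at fixed $\varepsilon_0$ the worst-case complexity is only polynomial in $d$ (it counts lattice points in an $\ell_p$-ball of fixed radius), and by Lemma 2.1(ii) the worst-case problem is in fact ALG-QPT precisely when $\alpha\ge p$, so no comparison with the worst case can disprove ALG-QPT of $\rm APP$ there. The paper instead lower-bounds the trace: with $m_0\asymp(\ln 2d)^{p/(\alpha-p)}$ one gets $\Lambda_d\ge\exp(\beta m_0^{\alpha/p})$, hence $\ln n^{\rm NOR}(1/2,d)\gtrsim(\ln d)^{\alpha/(\alpha-p)}$ with exponent $>1$, which contradicts QPT. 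Finally, the NOR lower bounds (the curse for $\alpha\le p$ in (iv) and the "only if" parts of (ii)) are left as an AEP/stochastic-dominance sketch that is neither carried out nor needed: since $\lambda_{d,1}=1$, any $n$ eigenvalues capture mass at most $n$, so $n^{\rm NOR}(\varepsilon,d)\ge(1-\varepsilon^2)\Lambda_d$ (the paper's Lemma 3.3), and $\Lambda_d\ge\bigl(1+2\sum_{h\ge1}e^{-2\beta h^p}\bigr)^d$ for $\alpha\le p$ then yields the curse and the failure of ALG-WT and ALG-$(s,t)$-WT with $t\le1$ for ABS and NOR simultaneously; your anticipated "main obstacle" thus has a one-line resolution, but as proposed the NOR halves of (ii) and (iv), and all of (iii) for $\alpha>p$, remain unproved.
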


\begin{thm}Consider the approximation problem as in Theorem 2.2.
     For the absolute or normalized error criterion in the average case setting, we
     have \vskip 2mm

    (i) ${\rm APP}$
    is   EXP with the exponent $p^{*, \rm avg}(d)=\alpha/d,$ and is not UEXP; \vskip 2mm

    (ii) $\rm APP$ is not $\rm EXP$-$\rm UWT$,  not EXP-QPT, not EXP-PT or
    EXP-SPT;

   \vskip 2mm

    (iii) $\rm APP$ is $\rm EXP$-$\rm WT$ if and only if
    $\alpha>p$;

   \vskip 2mm

    (iv) $\rm APP$ is EXP-$(s,t)$-WT for $s>0$ and $t>1$;

    \vskip 2mm

    (v) $\rm APP$ is EXP-$(s,t)$-WT with $t\leq 1$ and $0<s\le1$ if and only if
    $s>\frac{p}{\az}$;

    \vskip 2mm

    (vi) $\rm APP$ is EXP-$(s,t)$-WT with $t\leq 1$ and $s>1$ if and only if  $\alpha>p$.
\end{thm}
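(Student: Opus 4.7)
The plan is to reduce Theorem 2.3 to the worst case Lemma 2.1 through three devices: (a) the worst/average case equivalence from \cite{X3,PPXD} for EXP-WT, EXP-UWT, and EXP-$(s,t)$-WT with $0<s\le1,\,t>0$ under the absolute error criterion; (b) the curse of dimensionality from Theorem 2.2(iv), valid iff $\az\le p$, which yields $\ln n^{X,\star}(\vz,d)\gtrsim d$ along an infinite subsequence of $d$'s and so precludes EXP-$(s,t)$-WT whenever $t\le1$; and (c) the monotonicity of EXP-$(s,t)$-WT in $(s,t)$ together with the free positive transfer $n^{\rm NOR}(\vz,d)\le n^{\rm ABS}(\vz,d)$ from \eqref{2.33}.

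For part (i), I would combine the sharp decay $\lz_{d,k}\asymp\exp(-2C_{\az,p}\beta d^{\az/p}k^{\az/d})$ for $k\ge 2^d$, imported via Lemma 2.1(iv) from \cite{KMU}, with an integral comparison to bound the tail $e^{\rm avg}(n,d)^2=\sum_{k>n}\lz_{d,k}$ by a polynomial-in-$(n,d)$ multiple of $\exp(-c\beta d^{\az/p}n^{\az/d})$. This yields \eqref{1.6} with $p(d)=\az/d$, and the matching lower bound $e^{\rm avg}(n,d)\ge e^{\rm wor}(n,d)$ together with Lemma 2.1(iv) pins $p^{*,{\rm avg}}(d)=\az/d$; since $\az/d\to0$, $\rm APP$ is not UEXP.

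Parts (ii), (iii), and the $0<s\le1$ portion of (v) under ABS follow directly from reduction (a) applied to Lemma 2.1(v),(vi),(vii). The NOR positive directions come from (c); the NOR negative directions with $\az\le p$ follow from reduction (b). The remaining case in (v), namely $\az>p$ and $s\le p/\az\le1$ under NOR, is handled by the lower bound $n^{\rm avg,NOR}(\vz,d)\ge n^{\rm wor}(\vz\sqrt{M_d},d)$, where $M_d={\rm trace}\,C_{\nu_d}$, combined with Lemma 2.1(vii) and the exponential bound $M_d\le B^d$ that comes from the superadditivity $(\sum|k_j|^p)^{\az/p}\ge\sum|k_j|^\az$ available for $\az\ge p$. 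The remaining negative assertions in (ii) then follow from the chain EXP-SPT$\Rightarrow$EXP-PT$\Rightarrow$EXP-QPT$\Rightarrow$EXP-UWT.

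For part (iv), the estimate from (i) yields $\ln n^{\rm avg,\star}(\vz,d)=O(d\ln d+d\ln\ln\vz^{-1})$, and splitting the limit into the regimes $\ln\vz^{-1}\le d^{t/s}$ and $\ln\vz^{-1}>d^{t/s}$ shows that the ratio with $(1+\ln\vz^{-1})^s+d^t$ tends to $0$ for every $s>0$ and $t>1$. For (vi) sufficiency I apply (v) at $s_0=1>p/\az$ to obtain EXP-$(1,t)$-WT and then invoke monotonicity in $s$; the necessity of (vi) comes from reduction (b). The main obstacle is the NOR negative direction of (v) when $\az>p$: neither reduction (a) (ABS-only) nor reduction (b) (which needs $\az\le p$) is available, so the argument must quantitatively track how the normalization $\sqrt{M_d}$ shifts the worst-case lower bound and dovetail that with the $M_d\le B^d$ estimate.
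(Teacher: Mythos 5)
Your overall architecture matches the paper's: both proofs reduce Theorem 2.3 to Lemma 2.1 using the worst/average ABS equivalence from \cite{X3,PPXD} for EXP-WT and EXP-$(s,t)$-WT with $0<s\le1$, handle part (i) by transferring the worst-case EXP exponent, and finish (ii), (iv), (vi) by monotonicity, the implication chain among tractability notions, and the $\az\le p$ curse from Theorem 2.2. The paper proves (iv) by first getting EXP-$(s,t)$-WT for $t>1$, $0<s\le1$ via the equivalences and then pushing $s$ up by monotonicity, while you propose a direct complexity estimate derived from part (i); both are legitimate. For part (i) the paper appeals to the argument of \cite[Theorem 4.1]{LX2} rather than redoing the tail summation, but your sketch of summing $\exp(-c\beta d^{\az/p}k^{\az/d})$ is in the same spirit.

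The one substantive divergence is exactly where you flag the main obstacle: the NOR negative direction of (v) when $\az>p$, $t\le1$, $0<s\le p/\az$. The paper closes this cleanly by invoking the ABS $\iff$ NOR equivalence for average-case EXP-$(s,t)$-WT with $0<s\le1$ from \cite[Theorem 1.2]{W}, which removes the need for any bespoke lower-bound transfer. Your proposed route, $n^{\rm avg,NOR}(\vz,d)\ge n^{\rm wor}(\vz\sqrt{M_d},d)$ with $M_d\le B^d$, is a correct inequality but is not obviously sufficient as stated: since $\sqrt{M_d}\ge1$ it weakens the worst-case lower bound, and you have to argue that along the worst-case hard sequence the inflation $\ln\vz^{-1}\mapsto\ln\vz^{-1}+\tfrac12\ln M_d$ does not spoil the denominator $(1+\ln\vz^{-1})^s+d^t$. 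In fact this does work because the worst-case hard sequence for Lemma 2.1(vii) forces $\ln\vz^{-1}\asymp d^{\az/p}$, which dominates $\ln M_d=O(d)$ precisely because $\az>p$; but you leave this calculation implicit, and it is the entire content of the step. You should either spell out that quantitative comparison or, as the paper does, quote the ABS/NOR equivalence of \cite{W} and avoid the issue altogether.

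A minor point: the crude bound $M_d\le B^d$ via superadditivity of $x\mapsto x^{\az/p}$ is valid for $\az\ge p$, but in the regime $\az>p$ you can do much better ($M_d$ is sub-exponential, of order $\exp\big(C(\ln 2d)^{\az/(\az-p)}\big)$, which is exactly what the proof of Theorem 2.2(ii) gives); this stronger estimate would make the dovetailing in the previous paragraph trivial and is worth using explicitly.
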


\begin{rem} We compare the results about ALG-tractability and EXP-tractability in the average and worst  case
settings. In the average and worst  case settings, we never have
ALG-PT, ALG-SPT, EXP-SPT, EXP-PT,  EXP-QPT, and EXP-UWT, and
always have ALG-$(s,t)$-WT and EXP-$(s,t)$-WT with $s>0$ and
$t>1$. We never have ALG-QPT  in the average case setting, whereas
ALG-QPT holds for $\az\ge p$  in the worst case setting.

We always have ALG-UWT and ALG-$(s,t)$-WT with $s>0$ and $0<t\le1$
in the worst case setting, whereas in the average case setting we
have ALG-UWT and ALG-$(s,t)$-WT   with $s>0$ and $0<t\le1$ only
for $\az>p$. Also the sufficient and necessary conditions for
EXP-$(s,t)$-WT with $s>0$ and $0<t\le 1$ in the average case
setting are same or stronger than the ones in the worst case
setting.
\end{rem}
\begin{rem}  Consider the $L_\infty$ approximation problem $S_\infty=\{S_{\infty,d}\}_{d\in\Bbb N}$ in the worst case
setting defined by
    \begin{equation*}S_{\infty,d} :G^{\alpha,\beta,p}(\Bbb T^{d})\rightarrow L_{\infty}(\Bbb T^{d})\ \ {\rm with}\ S_{\infty,d} f=f.\end{equation*}
The worst case error of an algorithm $A_{n,d}$ of the form
\eqref{2.2} is defined by
$$e_\infty^{\rm wor}(A_{n,d})=\sup_{\|f\|_{G^{\alpha,\beta,p}}\le1}
\|f-A_{n,d} f\|_\infty.$$ The $n$-th minimal worst case error is
defined by
$$e_\infty^{\rm{wor}}(n,d)=\inf\limits_{A_{n,d}}e_\infty^{\rm{wor}}(A_{n,d}).$$
Let $\{\lambda_{d,k}\}_{k=1}^{\infty}$ be the nonincreasing
rearrangement of $\{\exp(-2\beta|\mathbf
k|_{p}^{\alpha})\}_{\k\in\mathbb{Z}^{d}}$. It follows from
\cite[Theorem 3.4]{CKS} (or \cite{KWW}) that
\begin{equation}\label{2.1.12}e_\infty^{\rm{wor}}(n,d)=\big(\sum_{k=n+1}^{\infty}\lz_{d,k}\big)^{1/2}.\end{equation}
For $n=0$ we set $A_{0,d}=0$. We obtain the so-called initial
error
$$
e_\infty^{\rm{wor}}(0,d)=\sup_{\|f\|_{G^{\alpha,\beta,p}}\le1}\|f\|_\infty=\big(\sum_{k=1}^{\infty}\lz_{d,k}\big)^{1/2}.$$
 The  information complexity  can be studied using either the
absolute error criterion (ABS) or the normalized error criterion
(NOR). In the worst case setting for $\star\in\{{
    ABS,\,NOR}\}$ we
define the information complexity $n_\infty^{{\rm
wor},\,\star}(\va ,d)$ as
\begin{equation} n_\infty^{ {\rm wor},\, \star}(\va
    ,d):=\inf\{n \ \big|\  e_\infty^{\rm wor}(n,d)\le \vz\, {\rm CRI}_d\}, \label{2.1.10}\end{equation} where
\begin{equation*}
    {\rm CRI}_d:=\left\{\begin{split}
        &\ \ 1, \qquad\,\ \quad\quad\text{for $\star$=ABS,} \\
        &e_\infty^{\rm wor}(0,d), \quad\ \, \text{ for
        $\star$=NOR}.
    \end{split}\right.
\end{equation*}
It follows from \eqref{2.2-0} and \eqref{2.1.12} (or \cite{KWW})
that for $\star\in\{{
    ABS,\,NOR}\}$,
$$e_\infty^{\rm{wor}}(n,d)=e^{\rm{avg}}(n,d),\ \ {\rm and}\ \ n_\infty^{ {\rm wor},\, \star}(\va,d)=n^{ {\rm avg},\,
\star}(\va,d).$$ Hence, the various notions of  tractability of
the $L_\infty$ approximation problem  $S_\infty$ in the worst case
setting are same as the ones of the $L_2$ approximation problem
{\rm APP} in the average case setting. So Theorems 2.2 and 2.3
hold also for the $L_\infty$ approximation problem $S_\infty$. See
\cite[Proposition 5.1]{GWang}.
    \end{rem}

\section{Proofs of Theorems 2.2 and 2.3}

In this section, we give the proofs of Theorems 2.2 and 2.3.
First, we give three auxiliary lemmas.

Let $\ell_{p}^d\ (0< p\le\infty)$ denote the space $\Bbb R^d$
equipped with the $\ell_{p}^d$-norm  defined by
$$\|\x\|_{\ell_{p}^d}:=\bigg\{\begin{array}{ll}\big(\sum_{i=1}^d
|x_i|^p \big)^{\frac 1p},\ \ \ &0< p<\infty;\\ \max_{1\le i\le d}
|x_i|, &p=\infty.\end{array}$$The unit ball of $\ell_{p}^d$ is
denoted by $B\ell_{p}^d$.

Let $A\subset \Bbb R^d$. The covering number $N_\vz(A)$ is the
minimal natural number $n$ such that there are  $n$ points in
$\Bbb R^d$ for which
$$ A\subset \bigcup_{i=1}^n( \x_i+\vz\, B\ell_\infty^d).$$
 The (nondyadic)
entropy numbers are the converse to $N_\vz(A)$ in some sense and
defined by
$$e_n(A,\ell_\infty^d):=\inf\{\vz>0\ |\ N_\vz(A)\le n\}.$$

For  $A=B\ell_p^d,\ 0<p<\infty$, we have  (see \cite{ET, K, RS,
Sc})
 \begin{align*}
&e_n(B\ell_p^d, \ell_\infty^d)\asymp \left\{\begin{matrix}
1, & \ \  1\le n\leq d,\\
 \Big(\frac{\log(1+\frac{d}{\log n})}{\log n}\Big)^{1/p},&\ \  d\le n\le 2^d, \\
 d^{-1/p}n^{-1/d},&\ \  n\ge 2^d,
\end{matrix}\right.
\end{align*}
with the equivalent constants independent of $d$ and $n$. Here,
$\log x=\log_2 x$, and $A\asymp B$ means that there exist two
constant $c$ and $C$ which are called the equivalence constants
such that $c A\le B\le C A$. It follows that for $0< p<\infty$ and
$\vz\in(0,1)$,
\begin{equation}\label{3.0-0}\ln (N_\vz(B\ell_p^d))\asymp
\bigg\{\begin{array}{ll}\vz^{-p}\ln(2d\vz^p), &d\vz^p\ge 1,\\ d\ln
(2(d\vz^{p})^{-1}), &d\vz^p\le1,
\end{array}\end{equation}where the equivalent constants depend only on $p$,
but are independent of $d$ and $\vz$.

 For $A\subset \Bbb R^d$, let $G(A)$ be the grid number of points in $A$ that lie on the grid $\Bbb Z^d$, i.e.,
$$G(A)=\#(A\cap \Bbb Z^d),$$where $\#A$ denotes the number of elements in a set
$A$. We set $$A=\{\k\in\Bbb Z^{d}\,\big|\,
|\k|_p^p=\sum\limits_{i=1}^{d}|k_{i}|^{p}\le m,m\in\Bbb N\big\}.$$ It follows
from \cite[Lemma 5]{KMU} that
$$N_1(A)\le G(A)\le N_{1/4}(A).$$
Using \eqref{3.0-0} and the same method in \cite[Lemma 3.1]{W}, we
obtain the following lemma.

\begin{lem}\label{l3.4.1}
We have for $m,d\geq 2$,
\begin{equation}
\ln{\Big(\#\big\{\k\in\Bbb Z^{d}\,\big|\,
|\k|_p^p=\sum\limits_{i=1}^{d}|k_{i}|^{p}\le
m\big\}\Big)}\asymp\bigg\{
\begin{array}{rcl}
d\ln{\left(\frac{2m}{d}\right)}, & & d\leq m,\\
m\ln{\left(\frac{2d}{m}\right)}, & & d\geq m,
\end{array}\nonumber
\end{equation}where the equivalent constants are independent of $d$ and $n$.
\end{lem}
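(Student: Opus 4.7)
The plan is to sandwich the cardinality between covering numbers of scaled $\ell_p^d$-balls and then import the sharp asymptotics \eqref{3.0-0}. Write $A_m := \{\k \in \Bbb Z^d : |\k|_p^p \le m\}$. Since $A_m \subset m^{1/p} B\ell_p^d$ and, by \cite[Lemma 5]{KMU} as recalled just above, $N_1(A_m) \le \#(A_m) \le N_{1/4}(A_m)$, the cardinality $\#(A_m)$ is effectively controlled by the covering number of $m^{1/p} B\ell_p^d$ at scale $O(1)$, equivalently the covering number of $B\ell_p^d$ at scale $O(m^{-1/p})$.

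For the upper bound, I use $\#(A_m) \le N_{1/4}(m^{1/p} B\ell_p^d) = N_\vz(B\ell_p^d)$ with $\vz := 1/(4 m^{1/p})$, hence $d\vz^p = d/(4^p m)$. Feeding this into \eqref{3.0-0}, the branch $d\vz^p \ge 1$ (essentially $d \ge m$) yields $\ln \#(A_m) \lesssim \vz^{-p}\ln(2 d\vz^p) \asymp m \ln(2d/m)$, while the branch $d\vz^p \le 1$ (essentially $d \le m$) gives $\ln \#(A_m) \lesssim d \ln(2(d\vz^p)^{-1}) \asymp d \ln(2m/d)$. The $p$-dependent factor $4^p$ is absorbed into the constants, and in the transitional band $d \asymp m$ both target expressions are $\asymp d \asymp m$, so the two cases of the lemma join seamlessly.

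For the lower bound, rather than attempting to bound $N_1(A_m)$ from below, I would exhibit many elements of $A_m$ directly. If $d \ge m$, the integer vectors with exactly $m$ coordinates equal to $\pm 1$ and the rest zero lie in $A_m$, hence
\[
\#(A_m) \ge \binom{d}{m}\, 2^m \ge \Big(\frac{2d}{m}\Big)^m, \qquad \text{so} \qquad \ln \#(A_m) \ge m\,\ln(2d/m).
\]
If $d \le m$, set $r := \lfloor (m/d)^{1/p}\rfloor \ge 1$; then $\{-r,\dots,r\}^d \subset A_m$ because $\sum_i|k_i|^p \le d r^p \le m$, which gives $\#(A_m) \ge (2r+1)^d$ and therefore $\ln \#(A_m) \gtrsim (d/p)\ln(2m/d) \asymp d\,\ln(2m/d)$ up to a $p$-dependent constant.

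The chief technical point to verify is that all implicit constants are independent of $d$ and $m$, depending only on $p$. For the upper bound this reduces to the corresponding independence clause in \eqref{3.0-0} (ultimately from \cite{ET, K, RS, Sc}); for the lower bound it is automatic, since the two explicit constructions are purely combinatorial. The only mildly subtle point in the write-up is reconciling the two target expressions in the transitional regime $d \asymp m$, but as noted above both sides are $\asymp d \asymp m$ there, so nothing nontrivial is lost.
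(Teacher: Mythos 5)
Your proof is correct, and it takes a genuinely different route for half the argument. For the upper bound you follow the same path as the paper: sandwich $\#(A_m)$ via \cite[Lemma~5]{KMU} through $N_{1/4}(m^{1/p}B\ell_p^d)=N_\vz(B\ell_p^d)$ with $\vz=1/(4m^{1/p})$, and then invoke the entropy asymptotics \eqref{3.0-0}; your observation that the $4^p$-shift in the threshold and the transitional band $d\asymp m$ are absorbed into $p$-dependent constants is exactly the bookkeeping needed. For the lower bound, however, you depart from the paper: the paper's intended argument (``the same method as \cite[Lemma~3.1]{W}'') uses the other half of the sandwich, $N_1(A)\le\#(A_m)$, and then the lower-bound direction of \eqref{3.0-0}, so both bounds are derived from a single mechanism. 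You instead exhibit explicit lattice points: $\binom{d}{m}2^m$ sign vectors of support size $m$ when $d\ge m$, and the cube $\{-r,\dots,r\}^d$ with $r=\lfloor(m/d)^{1/p}\rfloor$ when $d\le m$. This is more elementary and self-contained (it does not rely on the two-sidedness of the entropy estimate), at the modest cost of a short case analysis to pass from $\ln(2r+1)$ to $\ln(2m/d)$ with a $p$-dependent constant, which you gesture at correctly (note $2r+1\ge r+1>(m/d)^{1/p}$ and $\ln(2r+1)\ge\ln 3$ together give $\ln(2r+1)\ge\frac{1}{p+1}\ln(2m/d)$). Both routes prove the lemma; yours isolates the lower bound as a purely combinatorial statement, while the paper's keeps the proof parallel to the covering-number machinery it already set up.
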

In the average case setting, a sufficient condition for ALG-WT was
given in \cite[Lemma 6.5]{NW1}. Using the same induction, we
obtain  a similar sufficient condition for ALG-$(s,t)$-WT  in the
average case setting as follows.
\begin{lem}\label{l3.4.2}
If there exists a constant $\tau\in(0,1)$ such that
$$\lim_{d\rightarrow\infty}{\frac{\ln{\Big(\Big(\sum\limits_{j=1}^{\infty}\lambda_{d,j}
^{\tau}\Big)^{\frac{1}{\tau}}/CRI_{d}\Big)}}{d^{t}}}=0,$$ then
ALG-$(s,t)$-weak tractability holds for this $t>0$ and any $s>0$.
\end{lem}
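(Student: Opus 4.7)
The plan is to carry out a standard weak-type / pigeonhole estimate relating a tail sum of eigenvalues to an $\tau$-summable norm, and then invert it to bound the information complexity $n^{\star}(\varepsilon,d)$.

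Set $M_{d,\tau}:=\big(\sum_{j=1}^\infty\lambda_{d,j}^{\tau}\big)^{1/\tau}$. Since the sequence $\{\lambda_{d,j}\}$ is nonincreasing in $j$, pigeonhole on the first $n+1$ terms gives $(n+1)\lambda_{d,n+1}^{\tau}\le M_{d,\tau}^{\tau}$, so $\lambda_{d,n+1}\le M_{d,\tau}(n+1)^{-1/\tau}$. Using $\tau\in(0,1)$ to split $\lambda_{d,j}=\lambda_{d,j}^{\tau}\lambda_{d,j}^{1-\tau}$ and bounding the tail factor by $\lambda_{d,n+1}^{1-\tau}$ yields
\[
\sum_{j=n+1}^\infty\lambda_{d,j}\;\le\;\lambda_{d,n+1}^{1-\tau}\,M_{d,\tau}^{\tau}\;\le\;\frac{M_{d,\tau}}{(n+1)^{(1-\tau)/\tau}}.
\]
Combined with \eqref{2.2-0}, this gives $(e^{\rm avg}(n,d))^{2}\le M_{d,\tau}/(n+1)^{(1-\tau)/\tau}$. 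Hence $e^{\rm avg}(n,d)\le\varepsilon\cdot CRI_{d}$ holds as soon as $n+1\ge(M_{d,\tau}/(\varepsilon^{2}CRI_{d}^{2}))^{\tau/(1-\tau)}$, and taking logarithms (using $CRI_{d}\ge 1$ in both the ABS case and the NOR case, the latter since $CRI_{d}^{2}={\rm trace}(C_{\nu_d})\ge\lambda_{d,1}=1$) produces
\[
\ln n^{\star}(\varepsilon,d)\;\le\;\frac{\tau}{1-\tau}\Bigl[\ln\bigl(M_{d,\tau}/CRI_{d}\bigr)+2\ln\varepsilon^{-1}\Bigr]+O(1).
\]

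To finish, divide by $\varepsilon^{-s}+d^{t}$ and let $\varepsilon^{-1}+d\to\infty$. The contribution $2\ln\varepsilon^{-1}/(\varepsilon^{-s}+d^{t})$ vanishes in this joint limit for every $s>0$: when $\varepsilon^{-1}\to\infty$ it is dominated by $2\ln\varepsilon^{-1}/\varepsilon^{-s}\to0$, while when $\varepsilon^{-1}$ stays bounded we must have $d\to\infty$, with bounded numerator and denominator tending to infinity. For the remaining term, fix $\eta>0$: by the hypothesis there is $D_{\eta}$ with $\ln(M_{d,\tau}/CRI_{d})\le\eta d^{t}$ for $d\ge D_{\eta}$, while for $d<D_{\eta}$ the left-hand side is bounded by some $C_{\eta}$. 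Hence $\ln(M_{d,\tau}/CRI_{d})\le C_{\eta}+\eta d^{t}$ uniformly in $d$, so $\ln(M_{d,\tau}/CRI_{d})/(\varepsilon^{-s}+d^{t})\le\eta+o(1)$; since $\eta$ is arbitrary the limit is $0$, which gives ALG-$(s,t)$-WT.

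The only genuine subtlety is decoupling the joint limit $\varepsilon^{-1}+d\to\infty$ from the one-variable hypothesis in $d$; this is handled by the uniform dominating bound $C_{\eta}+\eta d^{t}$, mirroring the device used in \cite[Lemma 6.5]{NW1} for ALG-WT that the present lemma extends.
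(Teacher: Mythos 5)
Your proof is correct and follows essentially the same route the paper intends: the paper gives no written proof but invokes the standard argument of \cite[Lemma 6.5]{NW1}, which is exactly your $\tau$-moment/Chebyshev-type tail estimate $\sum_{j>n}\lambda_{d,j}\le\big(\sum_j\lambda_{d,j}^{\tau}\big)^{1/\tau}(n+1)^{-(1-\tau)/\tau}$ combined with \eqref{2.2-0}, inverted to bound $n^{\star}(\varepsilon,d)$, with the observation $CRI_d\ge1$ (valid here since $\lambda_{d,1}=1$) handling both ABS and NOR. Your explicit decoupling of the joint limit $\varepsilon^{-1}+d\to\infty$ via the uniform bound $C_\eta+\eta d^{t}$ is the same routine step implicit in the cited argument, so there is nothing substantively different to report.
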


We set $$\Lz_d=(e^{\rm
avg}(0,d))^2=\sum\limits_{k=1}^{\infty}\lz_{d,k}.$$

\begin{lem}\label{l3.4.3}
For $\vz\in(0,1)$ we have
\begin{equation}n^{\rm NOR}(\vz,d)\ge(1-\vz^{2})\Lz_{d}.\label{3.1}\end{equation}
\end{lem}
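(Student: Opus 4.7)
The plan is to unwind the definition of $n^{\rm NOR}(\vz,d)$ via \eqref{2.1.10} and \eqref{2.2-0}, and then use the crude upper bound $\lambda_{d,k}\le \lambda_{d,1}=1$ on the leading eigenvalues. This is a standard trace-based lower bound argument in the average case setting, so I do not expect any genuine obstacle.

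More concretely, I first set $n=n^{\rm NOR}(\vz,d)$, noting that by definition this is the least non-negative integer for which $e^{\rm avg}(n,d)\le \vz\, e^{\rm avg}(0,d)$. Squaring this inequality and invoking \eqref{2.2-0} together with the identity $\Lambda_d=(e^{\rm avg}(0,d))^2=\sum_{k=1}^{\infty}\lambda_{d,k}$ rewrites the condition as
\begin{equation*}
\sum_{k=n+1}^{\infty}\lambda_{d,k}\le \vz^2\,\Lambda_d,
\quad\text{equivalently}\quad
\sum_{k=1}^{n}\lambda_{d,k}\ge (1-\vz^2)\,\Lambda_d.
\end{equation*}

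Next I use the monotonicity $\lambda_{d,1}\ge \lambda_{d,2}\ge\cdots$ together with the normalization $\lambda_{d,1}=1$ established just after \eqref{2.2-0} (which reflects the fact that $\exp(-2\beta|\mathbf 0|_p^\alpha)=1$ is the largest Fourier eigenvalue). This gives $\lambda_{d,k}\le 1$ for all $k$, so
\begin{equation*}
n=\sum_{k=1}^{n}1\ \ge\ \sum_{k=1}^{n}\lambda_{d,k}\ \ge\ (1-\vz^2)\Lambda_d,
\end{equation*}
which is exactly \eqref{3.1}.

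The only thing to double-check is that the argument handles the edge case $n=0$ correctly (in which case the middle sum is empty and the inequality reads $0\ge (1-\vz^2)\Lambda_d$, forcing $\vz=1$ since $\Lambda_d>0$, hence consistent with $\vz\in(0,1)$ ruling this out). No further estimates are needed; the lemma follows directly.
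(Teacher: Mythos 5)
Your proof is correct and is essentially the paper's own argument: both unwind the definition of $n^{\rm NOR}(\vz,d)$ via \eqref{2.1.10} and \eqref{2.2-0} to get $\sum_{k=1}^{n}\lambda_{d,k}\ge(1-\vz^{2})\Lambda_{d}$, then bound the partial sum by $n$ using $\lambda_{d,k}\le\lambda_{d,1}=1$. The edge-case remark about $n=0$ is harmless but unnecessary, since the same chain of inequalities already yields the claim.
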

\begin{proof}
Let $n_0=n^{\rm NOR}(\vz,d)$, by \eqref{2.1.10} and \eqref{2.2-0}
we get
$$\sum\limits_{k=n_0+1}^{\infty}\lz_{d,k}\le\vz^2
\sum\limits_{k=1}^{\infty}\lz_{d,k},$$
which implies that
$$\Lz_d-\sum\limits_{k=1}^{n_0}\lz_{d,k}\le\vz^2\Lz_d.$$
Noting that $\lz_{d,1}=1$, we have
$$n_0=n_0\lz_{d,1}\ge\sum\limits_{k=1}^{n_0}\lz_{d,k}\ge(1-\vz^2)\Lz_d.$$This
completes the proof of Lemma 3.3.
\end{proof}

Now we prove Theorems 2.2 and 2.3.

\

\noindent{\it Proof of Theorem 2.2.}\

(i) If $t>1$, then  we shall show that
\begin{equation}\lim_{d\rightarrow\infty}\frac{\ln\Big(\sum\limits_{k=1}^{\infty}\lz_{d,k}^{\frac
1 2}\Big)^2} {d^t}=0.\label{3.3}\end{equation}If \eqref{3.3} is
proved, then by Lemma 3.2 and \eqref{2.33} we get that APP is
ALG-$(s,t)$-WT with $s>0$ and $t>1$ for the absolute or normalized
error criterion. This completes the proof of (i).\vskip 2mm

It remains to prove \eqref{3.3}. For  $\alpha\ge p$, we have
\begin{align*}\sum_{k=1}^{\infty}\lz_{d,k}^{\frac 1
2}&=\sum_{\mathbf{k}\in\mathbb{Z}^d}
\exp(-\beta|\mathbf{k}|_p^\alpha)\le
\sum_{\mathbf{k}\in\mathbb{Z}^d} \exp(-\beta|\mathbf{k}|_p^p)\\
&=\sum_{\mathbf{k}=(k_1,\ldots,k_d)\in\mathbb{Z}^d}
\exp\Big(-\beta\sum_{i=1}^{d}|k_i|^p\Big)\nonumber\\
&=\sum_{\mathbf{k}=(k_1,\ldots,k_d)\in\mathbb{Z}^d}\prod_{i=1}^{d}
\exp\big(-\beta|k_i|^p\big)\nonumber\\
&=(1+2\sum_{h=1}^{\infty} \exp(-\beta h^p))^d=:A^d,\end{align*}
where $$A=1+2\sum_{h=1}^{\infty} \exp(-\beta h^p)>1$$ is a
constant independent of $d$. It follows that for $t>1$,
$$0\le \lim_{d\rightarrow\infty}\frac{\ln\big(\sum\limits_{k=1}^{\infty}\lz_{d,k}^{\frac 1 2}\big)^2}{d^t}
\le \lim_{d\rightarrow\infty}\frac{2\ln A}{d^{t-1}}=0,$$proving
\eqref{3.3}.

For $\alpha<p$ we have
 \begin{align}
\sum_{k=1}^{\infty}\lz_{d,k}^{\frac 1
2}&=\sum_{\mathbf{k}\in\mathbb{Z}^d}
\exp(-\beta|\mathbf{k}|_p^\alpha)=\sum_{m=1}^{\infty}\sum_{m-1\le|\mathbf{k}|_p^p<m}
\exp(-\beta|\mathbf{k}|_p^\alpha)\nonumber\\
&\le\sum_{m=1}^{\infty}\exp(-\beta(m-1)^\frac{\alpha}{p})
\sum_{m-1\le|\mathbf{k}|_p^p<m}1\nonumber\\
&\le\sum_{m=1}^{\infty}\exp(-\beta(m-1)^\frac{\alpha}{p})\#\{\k\in \Bbb Z^d\,\big|\ |\mathbf{k}|_p^p<m\}\nonumber\\
&\le\sum_{m=1}^{d}\exp(-\beta(m-1)^\frac{\alpha}{p})\exp(C_1m\ln\frac{2d}{m})\nonumber\\
&\quad\ \ \ +\sum_{m=d+1}^{\infty}\exp(-\beta(m-1)^\frac{\alpha}{p})\exp(C_1d\ln\frac{2m}{d})\nonumber\\
&=:I_1+I_2,\label{3.4}
\end{align}where $C_1$ is a positive constant independent of
$d$.

 We get for $\az<p$
\begin{align}
I_1&:=\sum_{m=1}^{d}\exp\big(-\beta(m-1)^{\frac{\alpha}{p}}\big)
\exp\big(C_1 m\ln(\frac{2d}{m})\big)\nonumber\\
&\leq\exp(C_1 d\ln (2d))\sum_{m=1}^{d}\exp\big(-\beta(m-1)^{\frac{\alpha}{p}}\big)\nonumber\\
&\le N_1\exp(C_1 d\ln (2d)),\label{3.5}
\end{align}where $$N_1:=\sum_{m=1}^{\infty}\exp\big(-\beta(m-1)^{\frac{\alpha}{p}}\big)<+\infty$$ is a constant. Since $$\lim_{m\rightarrow\infty}\frac{C_1\ln (2m)}
{\frac{\beta}{2}(m-1)^{\frac {\alpha} {2p}}}=0,$$ there exists a
positive integer $M>0$ such that for any  $m>M$, we have
\begin{equation}
C_1\ln (2m)\leq\frac{\beta}{2}(m-1)^{\frac {\alpha}
{2p}}.\label{3.6}
\end{equation}
We choose $m_1=\max\{M,2+\lfloor d^{\frac{2p}{\alpha}}\rfloor\}$.
For $m>m_1$ we  obtain
\begin{equation}
m_1\geq d+1\ \text{and}\ m-1\geq
 d^{\frac{2p}{\alpha}}.\label{3.7}
\end{equation}It follows from \eqref{3.6} and \eqref{3.7} that
\begin{equation}\frac{\beta}{2}(m-1)^{\frac {\alpha} {p}}=\frac{\beta}{2}(m-1)^{\frac {\alpha} {2p}} (m-1)^{\frac {\alpha} {2p}}\geq C_1\ln(2m) d.\label{3.8}
\end{equation}For sufficiently large $d$, we have
$$m_1=2+\lfloor d^{\frac{2p}{\alpha}}
\rfloor\le 2+d^{\frac{2p}{\alpha}} \leq
2^{\frac{2p}{\alpha}-1}d^{\frac{2p}{\alpha}},$$ which yields
\begin{equation}\ln(2m_1)\leq\frac{2p}{\alpha}\ln(2d).\label{3.9}
\end{equation}
We have
\begin{align}
I_2&:=\sum_{m=d+1}^{\infty}\exp(-\beta(m-1)^\frac{\alpha}{p})\exp(C_1d\ln\frac{2m}{d})\notag\\
&\le
\Big(\sum_{m=d+1}^{m_1}+\sum_{m=m_1+1}^{\infty}\Big)\exp\big(-\beta(m-1)^{\frac{\alpha}{p}}\big)
\exp\big(C_1 d\ln(2m)\big)\notag\\
&=:I_{23}+I_{24}.\label{3.9-1}
\end{align}
It follows from \eqref{3.8} and \eqref{3.9} that
\begin{align}
I_{24}&:=\sum_{m=m_1+1}^{\infty}\exp\big(-\beta(m-1)
^{\frac{\alpha}{p}}\big) \exp\big(C_1 d\ln(2m)\big)\notag\\
&\leq\sum_{m=1}^{\infty}\exp\big(-\frac{\beta}{2}(m-1)
^{\frac{\alpha}{p}}\big)=: N_2<+\infty,\label{3.10}
\end{align}
and
\begin{align}
I_{23}&:=\sum_{m=d+1}^{m_1}\exp\big(-\beta(m-1)^{\frac{\alpha}{p}}\big)
\exp\big(C_1 d\ln(2m)\big)\notag\\ &\leq\exp(C_1
d\ln(2m_1))\sum_{m=d+1}^{m_1}\exp\big(-{\beta}(m-1)
^{\frac{\alpha}{p}}\big)\notag\\ &\le
N_1\exp\big(\widetilde{C}d\ln(2d)\big),\label{3.11}
\end{align}where $\widetilde{C}=C_1\frac{2p}{\alpha}$.
By \eqref{3.4}, \eqref{3.5}, \eqref{3.9-1},  \eqref{3.10}, and
\eqref{3.11} we obtain for $\alpha<p$,
\begin{align*}
\sum_{k=1}^{\infty}\lz_{d,k}^{\frac 1 2}&\leq N_1\exp\big(C_1
d\ln(2d)\big)+N_2+
N_1\exp\big(\widetilde{C}d\ln(2d)\big)\\
&\leq\max\{2N_1,N_2\}\exp\big(\max\{C_1,\widetilde{C}\}d\ln(2d)\big).
\end{align*}
Hence, for $t>1$  we have
$$\lim_{d\rightarrow\infty}\frac{\ln\Big(\sum\limits_{k=1}^{\infty}\lz_{d,k}^{\frac 1 2}\Big)^2}
{d^t}\leq\lim_{d\rightarrow\infty}\frac{2\ln\max\{2N_1,N_2\}+
2\max\{C_1,\widetilde{C}\}d\ln(2d)} {d^t}=0,$$proving
\eqref{3.3}.\vskip 2mm

(ii) It suffices to show that if $\az> p$, APP is ALG-UWT for ABS
or NOR, and if $\az\le p$, APP is not ALG-WT or ALG-$(s,t)$-WT
with $s>0$ and $0<t\le 1$.

 First we show that  if $\az> p$, APP
is ALG-UWT for ABS or NOR.  By Lemma 3.2 and \eqref{2.33} it
suffices to prove \eqref{3.3} for any $t>0$.

We recall that$$\sum_{k=1}^{\infty}\lz_{d,k}^{\frac 1 2}\le
I_1+I_2,$$and
\begin{align}
I_2&:=\sum_{m=d+1}^{\infty}\exp(-\beta(m-1)^\frac{\alpha}{p})\exp(C_1d\ln\frac{2m}{d})\notag\\
&\le\sum_{m=d+1}^{\infty}\exp(-\beta(m-1)^\frac{\alpha}{p})\exp(C_1m\ln(2m)).\notag\end{align}
Since for  $\alpha>p$,
$$\lim_{m\rightarrow+\infty}\frac{C_1m\ln(2m)}{\frac\beta 2
 (m-1)^\frac{\alpha}{p}}=0,$$we have for sufficiently large $m$,
$$C_1m\ln(2m)\le \frac\beta 2
 (m-1)^\frac{\alpha}{p}.$$
It follows that for sufficiently large $d$,
\begin{equation}\label{3.13}I_2\le \sum_{m=d+1}^{\infty}\exp(-\frac{\beta}2(m-1)^\frac{\alpha}{p})
\le
\sum_{m=1}^{\infty}\exp(-\frac{\beta}2(m-1)^\frac{\alpha}{p})=:N_2<+\infty.\end{equation}

Next, we estimate $I_1$. Choose
$m_0=\lfloor(\frac{4C_1}{\beta}\ln(2d))^{\frac{p}{\az-p}}\rfloor+1$.
We recall that \begin{align*}
I_1&:=\sum_{m=1}^{d}\exp(-\beta(m-1)^\frac{\alpha}{p})\exp\big(C_1m\ln\frac{2d}m\big)\\
&\le \Big(\sum_{m=1}^{m_0}+\sum_{m=m_0+1}^{d}\Big)\exp(-\beta(m-1)^\frac{\alpha}{p})\exp\big(C_1m\ln(2d)\big)\\
&=:I_{11}+I_{12}.
\end{align*} If $m>m_0$, then we have for sufficiently large
$d$,
$$m-1\ge m_0\ge
(\frac{4C_1}{\beta}\ln(2d))^{\frac{p}{\az-p}},\ \ \ 4C_1\ln(2d)\le
\beta (m-1)^{\frac \az p-1},$$ and thence,
$$C_1m\ln(2d)\le (m-1) \,2C_1\ln(2d)\le  \frac\beta 2(m-1)^{\frac \az
p}.$$ We also have for sufficiently large $d$,
$$C_1m_0\ln(2d)\le C_2(\ln(2d))^\frac{\az}{\az-p}, \ \ \ C_2=C_1\big(\frac{4C_1}{\beta}\big)^
{\frac{p}{\az-p}}+C_1.$$ It follows that for sufficiently large
$d$,
\begin{align}
I_{11}&:=\sum_{m=1}^{m_0}\exp(-\beta(m-1)^\frac{\alpha}{p})\exp(C_1m\ln(2d))\nonumber\\
&\le\exp(C_1m_0\ln2d)\sum_{m=1}^{\infty}\exp\big(-{\beta}(m-1)^\frac{\alpha}{p}\big)\nonumber\\
&\le
N_1\exp\big(C_2(\ln(2d))^{\frac{\alpha}{\az-p}}\big),\label{3.14}
\end{align}
and \begin{align}
I_{12}&:=\sum_{m=m_0+1}^{d}\exp(-\beta(m-1)^\frac{\alpha}{p})\exp\big(C_1m\ln(2d)\big)\notag\\
&\le\sum_{m=m_0+1}^{d}\exp(-\frac{\beta}{2}(m-1)^\frac{\alpha}{p})\nonumber\\
&\le\sum_{m=1}^{\infty}\exp(-\frac{\beta}{2}(m-1)^\frac{\alpha}{p})
=N_2<+\infty.\label{3.15}
\end{align}

By \eqref{3.4}, \eqref{3.13}, \eqref{3.14}, and \eqref{3.15}, we
 obtain
\begin{align*}
\sum_{k=1}^{\infty}\lz_{d,k}^{\frac 1 2}&\le I_{11}+I_{12}+I_2\le
2N_2+ N_1\exp\big(C_2(\ln(2d))^{\frac{\alpha}{\az-p}}\big)\\ &\le
(2N_2+ N_1) \exp\big(C_2(\ln(2d))^{\frac{\alpha}{\az-p}}\big).
\end{align*}It follows that for any $t>0$,
$$\lim_{d\rightarrow\infty}\frac
{\ln\Big(\sum\limits_{k=1}^{\infty}\lz_{d,k}^{\frac 1
2}\Big)^2}{d^t} \le
\lim_{d\rightarrow\infty}\frac{2\ln(2N_2+N_1)+2C_2(\ln(2d))^{\frac{\alpha}{\az-p}}}{d^t}=0,$$
proving \eqref{3.3}.\vskip1mm

Next we show that if $\az\le p$, APP is not ALG-WT or
ALG-$(s,t)$-WT with $s>0$ and $0<t\le 1$ for ABS or NOR.  By
\eqref{2.33} it suffices to show it for NOR.  For $\az\le p$, we
have
\begin{align*}
\Lz_d&:=\sum_{k=1}^{\infty}\lz_{d,k}=\sum_{\mathbf{k}\in\mathbb{Z}^d}
\exp(-2\beta|\mathbf{k}|_p^\alpha)\ge
\sum_{\mathbf{k}\in\mathbb{Z}^d}
\exp(-2\beta|\mathbf{k}|_p^p)\notag\\&=(1+2\sum_{h=1}^{\infty}
\exp(-2\beta h^p))^d=: A^d,
\end{align*}where $A=1+2\sum_{h=1}^{\infty} \exp(-2\beta h^p)>1$ is a constant.
By \eqref{2.33} and \eqref{3.1}, for $\vz\in (0,1/2)$ we get
\begin{equation}n^{\rm{ABS}}(\vz,d)\ge n^{\rm{NOR}}(\vz,d)\ge
n^{\rm{NOR}}(1/2,d)\ge\frac{3}{4}A^d.\label{3.34}\end{equation}
Hence for any $s>0$ and $0<t\le1 $ we have
$$\lim_{(1/2)^{-1}+d\to\infty}\frac{\ln
(n^{\rm{NOR}}(1/2,d))}{(1/2)^{-s}+d^t}\ge \lim_{d\to\infty}
\frac{\ln 3/4+d\ln A}{2^s+d^t}\ge \ln A>0,$$ and
$$\lim_{(1/2)^{-1}+d\to\infty}\frac{\ln
(n^{\rm{NOR}}(1/2,d))}{(1/2)^{-1}+d}\ge \lim_{d\to\infty}
\frac{\ln 3/4+d\ln A}{2+d}=\ln A>0.$$ This  means that  APP is not
ALG-$(s,t)$-WT with $s>0$ and $0<t\le1$ or ALG-WT for NOR.

This completes the proof of (ii).\vskip 2mm

(iii) By (ii) we know that for $\alpha\le p$, $\rm{APP}$ is not
ALG-UWT for ABS or NOR, then $\rm{APP}$ is not ALG-QPT for ABS or
NOR. Hence, it suffices to show that for $\az>p$, $\rm{APP}$ is
not ALG-QPT for NOR.

For $\az>p$,   by \eqref{3.1} we have
 \begin{equation}
n^{\rm{NOR}}\big(\frac1 2,d\big)\ge\frac3 4\Lz_d.\label{3.16}
\end{equation}
By Lemma 3.1, there exists a constant $C_0>0$ such that for
$m_0\le d$,
$$\#\{\k\in\Bbb Z^d\,|\,|\k|_p^p\le m_0\}\ge
\exp(C_0m_0\ln\frac{2d}{m_0}).
$$ Choose
$m_0=\lfloor\big(\frac{C_0}{12\beta}\ln(2d)\big)^{\frac{p}{\az-p}}\rfloor$.
Then for sufficiently large $d$,  we have $m_0\le d$, and $$\ln
m_0\le\frac{p}{\az-p}\Big(\ln\frac{C_0}{12\beta}+\ln\ln2d\Big)\le\frac1
2\ln2d.$$ Hence we obtain for sufficiently large $d$,
\begin{align*} C_0m_0\ln\frac{2d}{m_0}&\ge \frac{C_0}2m_0\ln (2d)\ge
\frac{C_0}4 \big(\frac{C_0}{12\beta}\ln(2d)\big)^{\frac{p}{\az-p}}
\ln(2d)\\ &=3\beta
\big(\frac{C_0}{12\beta}\ln(2d)\big)^{\frac{\az}{\az-p}}\ge 3\beta
m_0^{\az/p}.
\end{align*}
 It
follows that
\begin{align}
\Lz_d&=\sum_{\mathbf{k}\in\mathbb{Z}^d}
\exp(-2\beta|\mathbf{k}|_p^\alpha)\nonumber\\
&\ge\sum_{|\mathbf{k}|_p^p\le m_0}
\exp(-2\beta|\mathbf{k}|_p^\alpha)\nonumber\\
&\ge\exp(-2\beta m_0^\frac{\alpha}{p})\sum_{|\mathbf{k}|_p^p\le m_0}1\nonumber\\
&\ge\exp(-2\beta
m_0^\frac{\alpha}{p})\exp\big(C_0m_0\ln\big(\frac{2d}{m_0}\big)\big)\notag\\
&\ge \exp(\beta m_0^\frac{\alpha}{p}).\label{3.17}
\end{align}
By \eqref{3.17} we  have for $\az>p$,
$$\lim_{d\rightarrow+\infty}\frac{\ln n^{\rm{NOR}}\big(\frac1 2,d\big)}{(1+\ln2)(1+\ln d)}
\ge\lim_{d\rightarrow+\infty}\frac{\ln \frac34+\beta
m_0^\frac{\alpha}{p}}{(1+\ln2)(1+\ln d)} =+\infty,$$ which
contradicts the definition of ALG-QPT. Hence, $\rm{APP}$ is not
ALG-QPT for NOR.

This completes the proof of (iii). \vskip2mm

(iv) If $\az> p$, from (ii) we know that $\rm{APP}$ is  ALG-UWT
for ABS or NOR. Hence $\rm{APP}$ does not suffer from the curse of
dimensionality for ABS or NOR. If $\az\le p$, then by \eqref{3.34}
we obtain $\rm{APP}$ suffers from the curse of dimensionality for
ABS or NOR. This completes the proof of (iv). \vskip1mm

The proof of Theorem 2.2 is finished. $\hfill\Box$

\

\noindent{\it Proof of Theorem 2.3. }

(i) In the worst case setting, it follows from Lemma 2.1 that $S$
is EXP with exponent $p^{*,\rm wor}(d)=\az/d$ and not UEXP. Using
the same method as in the proof of \cite[Theorem 4.1]{LX2} we
obtain that in the average case setting APP is EXP with exponent
$p^{*,\rm avg}(d)=\az/d$ and is not UEXP.\vskip2mm

(iii-v) According to \cite[Theorems 3.2 and 4.2]{X3} and
\cite[Theorem 3.2]{PPXD}, we have the same results in the worst
and average case settings using ABS concerning EXP-WT,  EXP-$(s,
t)$-WT with $0 < s \leq 1$ and $t > 0$. It follows from the proof
of \cite[Theorem 1.2]{W} that in the average case setting, APP is
EXP-$(s, t)$-WT with $0 < s \leq 1$ and $t > 0$ for ABS if and
only  if APP is EXP-$(s, t)$-WT with $0 < s \leq 1$ and $t > 0$
for NOR.\vskip1mm

Applying the above results to Lemma 2.1 (vi), (vii), we obtain
that in the average case setting for ABS or NOR, APP is EXP-WT if
and only if $\az>p$,  $\rm APP$ is EXP-$(s,t)$-WT with $t\leq 1$
and $0<s\le1$ if and only if  $s>\frac{p}{\az}$. This completes
the proof of (iii) and (v).\vskip1mm

We also have in the average case setting and for ABS or NOR,  $\rm
APP$ is EXP-$(s,t)$-WT with $t>1$ and $0<s\le1$. It follows that
$\rm APP$ is EXP-$(s,t)$-WT with $t>1$ and $s>1$. Hence,  $\rm
APP$ is EXP-$(s,t)$-WT with $t>1$ and  $s>0$. This completes of
proof of (iv).\vskip2mm

(ii) It follows from (v) that in the average case setting  for ABS
or NOR,  $\rm APP$ is not EXP-UWT, and hence not EXP-QPT, not
EXP-PT or EXP-SPT. This completes of proof of (ii).\vskip2mm

(vi) It follows from (v) that in the average case setting  for ABS
or NOR,  $\rm APP$ is EXP-$(1,t)$-WT with $t\leq 1$ if and only if
$\az>p$. Hence, if $\az>p$, then for ABS or NOR, $\rm APP$ is
EXP-$(s,t)$-WT with $t\leq 1$ and $s> 1$. On the other hand, if
$\rm APP$ is EXP-$(s,t)$-WT with $t\leq 1$ and $s> 1$ for ABS or
NOR, then $\rm APP$ is ALG-$(s,t)$-WT with $t\leq 1$ and $s> 1$
for ABS or NOR. By Theorem 2.2 (ii) we obtain that $\az>p$. This
completes of proof of (vi).\vskip1mm

The proof of Theorem 2.3 is finished. $\hfill\Box$

\

\noindent{\bf Acknowledgment}  This work was supported by the
National Natural Science Foundation of China (Project no.
12371098).

\end{document}